\newtheorem{theorem}{Theorem}[section]
\newtheorem{proposition}[theorem]{Proposition}
\newtheorem{remark}[theorem]{Remark}
\newcommand{\EE}{\mathbb{E}}
\newcommand{\NN}{\mathbb{N}}
\newcommand{\PP}{\mathbb{P}}
\newcommand{\wH}{{\widehat{H}}}
\newcommand{\bH}{{\cal{H}}}
\newcommand{\aL}{{\cal L}}
\newcommand{\un}{{\underline{n}}}
\newcommand{\on}{{\overline{n}}}
\newcommand{\E}{\mathbb{E}}
\begin{document}


\title{\itshape One step entropy variation in sequential sampling of species for the 
Poisson-Dirichlet Process}

\author{Servet Mart\'inez%
\thanks{E-mail address:  \texttt{smartine@dim.uchile.cl}.}
\qquad
Javier Santib\'a\~nez%
\thanks{E-mail address:  \texttt{jsantibanez@dim.uchile.cl}.}
\\
\parbox{5in}{%
Departamento de Ingenier{\'\i}a Matem\'atica and Centro de Modelamiento
Matem\'atico, UMI 2071 CNRS-UCHILE, Facultad de Ciencias F\'isicas y 
Matem\'aticas, Universidad de Chile, 
Santiago, Chile. 
}
}

\maketitle

\begin{abstract}
We consider the sequential sampling of species, where observed samples are
classified into the species they belong to.
We are particularly interested in studying some quantities describing the
sampling process when there is a new species discovery.
We assume that the observations  and species are organized as a two-parameter
Poisson-Dirichlet Process, which is commonly used as a Bayesian prior
in the context of entropy estimation, and we use
the computation of the mean posterior entropy
given a sample developed  in \cite{archer2014bayesian}.
Our main result shows the existence of a monotone functional,
constructed from the difference between the maximal
entropy and the mean entropy throughout the sampling process.
We show that this functional remains constant only when  a new
species discovery occurs.
\end{abstract}

\bigskip

\noindent {\bf AMS Classification Number:} 94A17

\bigskip 

\noindent {\bf Keywords:} Entropy, Bayesian posterior distribution, 
Poisson-Dirichlet Process, new species discovery.


\section{Introduction}

Consider the sequential sampling of species, where one takes a random sample
from a population and classifies each observation according to the
species (or classes) to which they belong.
Because the population is large, there are some rare species that may not
be observed.
We intend to understand and model the
discovery of a new species in this context
and to study related informational quantities.
Our main result shows that the two-step variation of differences
between the maximal entropy and the entropy allows
us to describe when a new species is discovered
in the Poisson-Dirichlet Process (PDP).
It is worth mentioning that our work is purely statistical.

\medskip

\noindent The two parameter PDP ---introduced by Pitman and Yor in 1997
\cite{pitman1997two}--- supplies random partitions with an
infinite number of components in $[0,1]$ and serves to model
the process of sampling species and
the times at which  new species are discovered, see
\cite{lmp}, \cite{gnedinp} and \cite{huillet2005unordered}.
This process has been used in ecology, but also in genetic applications
\cite{favaro2012new},
natural language processing  \cite{sharif2008overview} and finance
\cite{sosnovskiy2015financial}. In Section \ref{sec:PDP}, we will
introduce the PDP and some of the basic
properties that we shall use.

\medskip

\noindent Entropy is a way to measure the diversity
of communities in a sample and
our work focuses on studying some aspects of
the posterior entropy of the process of sampling
species in the PDP. The computation of posterior entropy
relies on the fact that given the sample from a PDP,
the posterior distribution is a mixture of a finite
Dirichlet distribution and a PDP.

\medskip

\noindent Much  of this paper concern with Bayesian entropy
 estimation, is due to the results in \cite{archer2014bayesian}, in which
the prior and  posterior mean entropies for the PDP were computed and
some of their properties  stated.
This is discussed in Section \ref{sec:entropia}.
In Proposition \ref{prop:entropy-post-PDP-desigualdad},
we provide  lower and upper bounds for
the entropy when the sample size is fixed.

\medskip

\noindent The main purpose of this work is to obtain an increasing
functional along the process constructed with posterior mean entropy
between two successive steps of the PDP with parameters $(\alpha,\theta)$.
This functional is,
\begin{equation}
\label{eq111}
\aL_{\ell}=
(\theta+\ell)({\wH}^{\hbox{\footnotesize{max}}}_{\ell}-{\wH}_{\ell}),
\end{equation}
and satisfies the monotone property $\aL_{\ell+1}\ge \aL_\ell$.
Here ${\wH}_\ell$ denotes the posterior entropy when observing a sample at
step $\ell$ and ${\wH}^{\hbox{\footnotesize{max}}}_{\ell}$ is its maximum
over all samples of size $\ell$.
Our main result is Theorem \ref{teo:Delta-ppal} in Section
\ref{sec:variacion}, where we show  that $\aL_\ell$ is increasing and
the equality $\aL_{\ell+1}=\aL_\ell$ is attained only when a new species is
discovered.

\medskip

\noindent We also show that the weighted
difference of entropies satisfies
$$
(\theta+\ell+1){\wH}_{\ell+1}-(\theta+\ell){\wH}_{\ell}> 0.
$$
The expression (\ref{nlogn2}) obtained in Theorem
\ref{teo:Delta-ppal}, for the above difference of weighted entropies,
allows us to think of the entropy as a sum of the `discovery values' of the
sampled species, plus an additive deterministic term depending on 
$\ell, \alpha$ and $\theta$.
On the other hand, the expression (\ref{nuevoDelta}) allows
us to write straightforwardly the functional $\aL_{\ell}$ as a sum of
positive rewards for `reinforcing the knowledge' of what it is known, and 
no additional additive term is required.
The discovery values and the reinforcement rewards
are expressed in terms of the digamma function.
This is discussed in Remark \ref{entdisc}. 

\medskip

\noindent We also study similar quantities in the frequentist
framework and relations in the same vein are shown in Proposition
\ref{freqc}.

\section{Poisson-Dirichlet Process}
\label{sec:PDP}

\noindent This section is devoted to the definition of the 
PDP and to supply some of its 
properties. We follow the articles
\cite{orbanz2014lecture}, \cite{buntine2012bayesian},
\cite{teh2010dirichlet}, \cite{orbanz2010bayesian}, 
\cite{sharif2008overview} and \cite{archer2014bayesian}. Since this is 
a well-known theory we only state
those results directly related to our work.

\medskip

\noindent Let $0\leq \alpha< 1$ and $\theta>-\alpha$. Consider 
independent random variables 
$\beta_k\sim \, \text{Beta}(1-\alpha, \theta +\alpha k)$. 
Let $\pi=(\pi_k: k\ge 1)$ be given by the two-parameter 
Griffiths-Engen-McCloskey
distribution, $GEM(\alpha, \theta)$,
\begin{equation*}
\pi_1:= \beta_1,\quad \pi_k := \beta_k \prod_{j=1}^{k-1}(1-\beta_j) \quad
k\geq 2,
\end{equation*}
which defines a probability vector a.s. Now consider a non-atomic probability 
measure $G$ defined on space ${\cal X}$. 
Let $(\phi_k: k\ge 1)$ be an i.i.d. sequence 
with distribution as $G$, then are all different a.s.
We assume $\phi=(\phi_k: k\ge 1)$ are independent of $\pi$. 
The discrete random measure 
\begin{equation}
\label{eqn:PDP-Theta}
\Xi(\cdot) = \sum_{k\geq 1}\pi_k \delta_{\phi_k}(\cdot)
\end{equation}
is called the PDP with base measure $G$ and
parameters $\alpha$ and $\theta$.
The base measure $G$ is non-atomic, this is used to give different names 
to the species in the process $\Xi(\cdot)$, but the unique fact that 
matters is that the species are different, the exact names are not 
important, and this explains why we ignore $G$ and one simply notes 
$PDP(\alpha,\theta)$.

\medskip

\noindent The case $\alpha=0$ is called Dirichlet process and it can be 
constructed
as an infinite extension of a Dirichlet distribution.
Examples on how PDP help to model
different phenomena can be seen
in \cite{orbanz2014lecture} and \cite{newman2005power}.

\medskip

\noindent Samples from a PDP are obtained 
from (\ref{eqn:PDP-Theta}) in the following way.
For a random measure $\Xi(\cdot)$ one takes an i.i.d. 
sequence of variables $(X_n: n\ge 1)$ with values in ${\cal X}$. 
Let $\mathbf{X}_\ell=(X_1,\dots,X_\ell)$ be a sample of size $\ell$ 
collected in a sequential way. By $K_\ell$ we note the total 
number of different species of the sample 
which are noted by $X_1^*,\dots, X_{K_\ell}^*$. For 
$j=1,\dots,K_\ell$ we note by $N^\ell_j$ the number 
of times that the species $X_j^*$ is observed in the sample, 
so $\ell=\sum_{j=1}^{K_\ell} N^\ell_j$.
Further we do not take into account the order 
of the species in the sample, if needed one can enumerate their 
frequencies in their decreasing order. 
So, $(N^\ell_j: j=1,\dots, K_\ell)$ 
means the multiset of frequencies (that is a set where the values can be 
repeated). 

\medskip

\noindent The conditional probability for a new observation
$X_{\ell+1}$ is, see \cite{buntine2012bayesian},
\begin{equation}
\label{condtr}
\PP(X_{\ell+1}=\bullet \, | \,  \mathbf{X}_\ell) = \frac{\theta + 
\alpha K_\ell}
{\theta + \ell} G(\cdot) + \sum_{j=1}^{K_\ell} 
\frac{N^\ell_j - \alpha}{\theta + \ell} \delta_{X_j^*}\;.
\end{equation}
So, the observation $X_{\ell+1}$ is part of the 
species $X^*_j$ already observed with probability
$\frac{N^\ell_j -\alpha}{\theta + \ell}$, and $X_{\ell+1}$ defines a new
species with probability $\frac{\theta + \alpha K_\ell}{\theta + \ell}$.
In this last case the new species $X_{\ell+1}=X_{K_\ell+1}^*$ is distributed
as $G$ independently of the species already discovered, and
$\ell+1$ is said to be the discovery time of a new species.
That is, 
the transition probability (\ref{condtr}) states the probability of discovering
a new species and gives a different name to it, the important point is that it 
is different to the previous ones.

\medskip

\section{Bayesian entropy}
\label{sec:entropia}

\noindent To define the Bayesian entropy one assumes a prior distribution 
and makes the estimation of entropy based
upon the posterior distribution given the sample.
We will introduce Bayesian entropy in the context
of PDP following closely, as mentioned in the introduction, the results 
in \cite{archer2014bayesian}, and also
\cite{archer2012bayesian} and \cite{chao2003nonparametric}.
To do so, we need to recall the definition of entropy.
\noindent Let $\pi$ be a distribution, the Shannon entropy is defined as
\begin{equation*}
    H(\pi) = - \sum_{i=1}^\infty \pi_i \log(\pi_i).
\end{equation*}

\noindent For further computations it is useful to introduce 
the digamma function and some of its properties, which can be found in
\cite{abramowitz1972handbook} and \cite{alzer1997some}.
This function is the logarithmic derivative of the Gamma function:
\begin{equation*}
\psi(x) = \frac{d}{dx}\log(\Gamma(x))
=\frac{\Gamma'(x)}{\Gamma(x)},
\end{equation*}
where $\Gamma(x)=\int_0^\infty t^{x-1}e^{-t}dt$.
From $\Gamma(x+1)=x\Gamma(x)$, one gets
$\psi(x+1) = \psi(x) + 1/x$ for $x>0$, that implies
\begin{equation}
\label{recmult}
x\psi(x+1)-(x-1)\psi(x)=\psi(x)+1, \; x>0.
\end{equation}
The digamma function is increasing for $x>0$ and then 
$x\psi(x+1)-(x-1)\psi(x)$ is also increasing for $x>0$.
Since $\psi(2) > 0$,
then $x\psi(x+1)>(x-1)\psi(x)$ when $x\ge 1$.  
The digamma function admits the following bounds in terms 
of the logarithmic function, see \cite{alzer1997some}: 
\begin{equation}
\label{eqn:psi-cotas} 
\log(x) - \frac{1}{x} \leq \psi(x) \leq \log(x)
-\frac{1}{2x}, \quad x>0.
\end{equation}
\noindent For $x$ sufficiently big the digamma function can be 
approximated by
\begin{equation}
\label{eqn:psi-approx}
\psi(x)=\log(x)-\frac{1}{2x}+o\left(\frac{1}{x}\right).
\end{equation}

\subsection{Entropy for the Poisson-Dirichlet Process}
\label{subsec:PDP-entropy}

\noindent Let $\mathbf{X}_\ell=(X_1,\dots,X_\ell)$ be a sample 
following a distribution $\pi$. 
The Bayesian approach for estimating the entropy requires to assume a 
prior distribution $\pi$ and estimate the posterior distribution. The 
least square Bayes estimator has the shape: $\EE(H(\pi) | \mathbf{X}_\ell)$.

\medskip

\noindent When one takes a PDP as prior, the sample $\mathbf{X}_\ell$ should 
be obtained from the random measure $\Xi$, given by \eqref{eqn:PDP-Theta}.
But, as we mentioned before, we can omit any reference to $G$, so the sample 
is obtained from the weight distribution $\pi$
and we will refer
to the process and its weight distribution 
indistinctly by the same symbol, that is, the prior is 
$\pi\sim PDP(\alpha, \theta)$. In \cite{archer2014bayesian} the 
prior mean of $H(\pi)$ 
is proven to be, 
\begin{equation*}
\E(H(\pi)) = \psi(\theta+1) - \psi(1-\alpha).
\end{equation*}

\noindent We are interested in finding the posterior mean of $H(\pi)$, 
after seeing a sample. To describe the posterior distribution 
consider the sample $\mathbf{X}_\ell$ with 
$K_\ell$ different species and 
frequencies $N^\ell_1,\dots,N^\ell_{K_\ell}$.
To simplify notation put $K_\ell=k$ and $N^\ell_j=n_j$ for $j=1,\dots,k$.
In \cite{ishwaran2003} it was shown that the posterior 
distribution 
$\pi_{post} = (p_1,\dots,p_k,(1-\sum_{j=1}^k p_j)\pi')$ is
given by the mixture
\begin{eqnarray*}
(p_1,\dots,p_k,1-\sum_{j=1}^k p_j) 
&\sim& \text{\normalfont Dirichlet}(n_1-\alpha, \dots, 
n_k-\alpha,\theta + \alpha k) \\ 
\pi'=(\pi_1',\pi_2',\dots) &\sim& PDP(\alpha,\theta +\alpha k).
\end{eqnarray*}

\noindent Hence, the probability of belonging to some species $X_j^*$
already present in the sample is $p_j$ for $j=1,\dots,k$; 
and the probability to belong to a new species  
is $1-\sum_{j=1}^k p_j$, where the distribution of 
these probabilities depend on the frequencies $(n_j)$ and $k$.
In the event that a new species is discovered it will
be part of a specific species $i$ with weight $\pi'_i$.

\medskip

\noindent The species $X_i^*$ related to the prior distribution $\pi$, 
is not the 
same as the species $X_i^*$ in the posterior distribution $\pi_{post}$,
because the index taken after observing the sample is arbitrary. But, this
index discrepancy does not cause any problem since 
the ordering of $\pi_i$ is not important in $H(\pi)$ and the transition probability 
for the discovery of a new species and for the species that have been discovered 
in the past continues to have the weights given by (\ref{condtr}).
Also, the posterior distribution of $\pi$ is 
represented by a realization $\pi_{post}$ whose ordering
is totally different from the ordering of $\pi$, 
this realization is only one representation of the posterior distribution. 

\medskip

\noindent The Bayes estimator of the posterior mean of the entropy 
under the PDP prior, at step $\ell$, will be defined as
\begin{equation*}
    \widehat{H}^\ell_{PDP} = \E(H(\pi) | \mathbf{X}_\ell).
\end{equation*}
We will write $H$ instead of $H(\pi)$ when there is no confusion, so 
$\widehat{H}^\ell_{PDP} = \E(H | \mathbf{X}_\ell)$.
In \cite{archer2014bayesian} it was shown 
that the posterior mean of $H$ under the PDP prior is,
\begin{equation}
\label{eqn:entropy-post-PDP}
\widehat{H}^\ell_{PDP} = \psi(\theta+\ell+1) -
\frac{\theta +\alpha k}{\theta+\ell}\psi(1-\alpha) 
- \frac{1}{\theta+\ell}\sum_{i=1}^k   
(n_i-\alpha)\psi(n_i-\alpha+1).
\end{equation}

\medskip

\noindent Let ${\widehat{\pi}}^\ell$ be the vector of empirical probabilities 
${\widehat{\pi}}^\ell_i=n_i/\ell$, for $i=1,\dots,k$, and ${\widehat{\pi}}^\ell_i=0$ 
for $i>k$,  given  by the sample
$\mathbf{X}_\ell$. The Maximum Likelihood 
Estimator (MLE) of the entropy, at step $\ell$, under multinomial likelihood,
is given by
\begin{equation}
\label{eqn:entropy-MLE}
\widehat{H}^\ell_{MLE} = H(\widehat{\pi}^\ell) =
- \sum_{i=1}^\infty \widehat{\pi}^\ell_i \log({\widehat{\pi}^\ell_i}),
\end{equation}
which is a biased estimator.  In \cite{archer2014bayesian} it is shown 
that when $K_\ell/\ell$ converges
in probability to $0$, then $\widehat{H}^\ell_{PDP}$ satisfies the following
consistency property,
\begin{equation}
\label{converg}
|\widehat{H}^\ell_{PDP} - \widehat{H}^\ell_{MLE}| \to 0 \hbox{ as }
\ell\to \infty.
\end{equation}

\subsection{Bounds for the posterior PDP entropy}
\label{secbo}

\noindent Let us obtain lower and upper bounds for the entropy when the
sample size is fixed. This is made firstly when the number of species 
is fixed and after over all possible number of species in the sample.

\medskip

\begin{proposition}
\label{prop:entropy-post-PDP-desigualdad}
For a sample $\mathbf{X}_\ell$ of a PDP$(\alpha,\theta)$, with $k$ 
different species the entropy is upper and lower bounded by,
\begin{eqnarray*}
\E(H|\mathbf{X}_\ell)&\le& \psi(\theta\!+\!\ell\!+\!1) -
\frac{\theta \!+\!\alpha k}{\theta\!+\!\ell}\psi(1\!-\!\alpha)
\!-\! \frac{1}{\theta\!+\!\ell}\sum_{i=1}^k
({\on}_i\!-\!\alpha)\psi({\on}_i\!-\!\alpha\!+\!1);\\
\E(H|\mathbf{X}_\ell)&\ge&
 \psi(\theta\!+\!\ell\!+\!1) 
-\frac{\theta\!+\!\alpha k}{\theta\!+\!\ell}\psi(1\!-\!\alpha)
\!-\! \frac{1}{\theta\!+\!\ell}\sum_{i=1}^k
({\un}_i\!-\!\alpha)\psi({\un}_i\!-\!\alpha\!+\!1);
\end{eqnarray*}
where the vectors of frequencies $({\on}_i:i=1,\dots,k)$ 
and $({\un}_i:i=1,\dots,k)$ of the maximal
entropy and the minimal entropy respectively, 
have the following structures up to index permutation: 
$$
{\on}_i=\lfloor \ell/k \rfloor , i=1,\dots, l_k, 
\quad {\on}_i=\lfloor \ell/k \rfloor +1, i=l_k+1,\dots, l_k+h_k
$$
where $\lfloor x \rfloor$ is the biggest integer smallest or equal 
to $x$, $h_k=\ell-k\lfloor \ell/k \rfloor$ and $l_k=k-h_k$; and
$$
{\un}_k=\ell-(k-1) \hbox{ and } {\un}_i=1, \; i=1,\dots,k-1.  
$$
Moreover, when one looks for the global bounds on all entropy maxima for 
$k\in \{1,\dots,\ell\}$,  
one finds that: the global maximum is attained when
the $\ell$ elements of the sample belong to different species and the 
global minimum is attained when          
the $\ell$ elements of the sample belong to a unique species. This is, 
$$
\min_{\mathbf{Y}_\ell}\E(H|\mathbf{Y}_\ell) \le \E(H|\mathbf{X}_\ell)
\le \max\limits_{\mathbf{Y}_\ell}\E(H|\mathbf{Y}_\ell)
$$
with
\begin{eqnarray}
\label{supt}
\max\limits_{\mathbf{Y}_\ell}\E(H|\mathbf{Y}_\ell)\!\!&=& \!\! 
\psi(\theta\!+\!\ell\!+\!1)
- \psi(1-\alpha)-\frac{\ell}{\theta+\ell},\\ 
\label{inft}
\min\limits_{\mathbf{Y}_\ell}\E(H|\mathbf{Y}_\ell)\!\!
&=& \!\! \psi(\theta\!+\!\ell\!+\!1) \!-\! 
\frac{(\theta\!+\!\alpha)\psi(1\!-\!\alpha)}{\theta\!+\!\ell}
\!-\! \frac{(\ell\!-\!\alpha)\psi(\ell\!-\!\alpha\!+\!1)} 
{\theta\!+\!\ell}.
\end{eqnarray}
\end{proposition}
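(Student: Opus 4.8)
The plan is to exploit the fact that, in the posterior entropy formula (\ref{eqn:entropy-post-PDP}), the first two summands $\psi(\theta+\ell+1)$ and $\tfrac{\theta+\alpha k}{\theta+\ell}\psi(1-\alpha)$ depend only on $k$ and $\ell$, and not on how the $\ell$ observations are distributed among the $k$ species. Thus, once $k$ and $\ell$ are fixed, maximizing or minimizing $\E(H\mid\mathbf{X}_\ell)$ over all frequency vectors $(n_1,\dots,n_k)$ with $\sum_i n_i=\ell$ and $n_i\ge 1$ is equivalent to, respectively, minimizing or maximizing the single sum $S=\sum_{i=1}^k g(n_i)$, where I set $g(n)=(n-\alpha)\psi(n-\alpha+1)$. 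The whole fixed-$k$ problem therefore reduces to a discrete optimization of $S$ under a fixed-sum, fixed-length constraint.

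The key step is an identity for the first difference of $g$. Using $\psi(x+1)=\psi(x)+1/x$ as in (\ref{recmult}), one obtains $g(n+1)-g(n)=(n+1-\alpha)\psi(n-\alpha+2)-(n-\alpha)\psi(n-\alpha+1)=\psi(n-\alpha+1)+1$. Since $\psi$ is increasing, this first difference is strictly increasing in $n$, so $g$ is strictly (discretely) convex. Two exchange arguments then pin down the extremal configurations. First, if some pair satisfies $n_i\ge n_j+2$, replacing $(n_i,n_j)$ by $(n_i-1,n_j+1)$ changes $S$ by $\psi(n_j-\alpha+1)-\psi(n_i-\alpha)<0$, strictly lowering $S$; iterating until no two frequencies differ by more than one produces the balanced vector $\on$ and hence the maximal entropy. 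Symmetrically, if $n_i\ge n_j\ge 2$, replacing $(n_i,n_j)$ by $(n_i+1,n_j-1)$ changes $S$ by $\psi(n_i-\alpha+1)-\psi(n_j-\alpha)>0$, strictly raising $S$; iterating until only one frequency exceeds one yields the polarized vector $\un=(\ell-k+1,1,\dots,1)$ and the minimal entropy. This establishes the two fixed-$k$ bounds.

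For the global bounds I would compute the effect of a single split. Splitting a species of frequency $n_i\ge 2$ into a part of size $n_i-1$ and a fresh singleton raises $k$ by one, and a short computation using the same first-difference identity (together with $g(1)=(1-\alpha)\psi(1-\alpha)+1$) shows that $\E(H\mid\mathbf{X}_\ell)$ changes by $\tfrac{1}{\theta+\ell}\bigl(\psi(n_i-\alpha)-\psi(1-\alpha)\bigr)$, which is strictly positive since $n_i-\alpha>1-\alpha$ and $\psi$ is increasing. Consequently, any configuration containing a block of size at least two can be strictly improved, so the global maximum is attained only at the all-singletons sample $k=\ell$; substituting $n_i=1$ into (\ref{eqn:entropy-post-PDP}) and simplifying with $\psi(2-\alpha)=\psi(1-\alpha)+1/(1-\alpha)$ collapses the $\psi(1-\alpha)$ terms and gives (\ref{supt}). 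The same positivity, read in reverse along the chain of minimal configurations $(\ell-k+1,1,\dots,1)$ obtained by peeling singletons off a single block, shows that the minimal entropy $m(k)$ strictly increases with $k$, so the global minimum sits at $k=1$, i.e. a single species of frequency $\ell$, which upon substitution yields (\ref{inft}).

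The step I expect to require the most care is verifying that the iterated exchanges terminate exactly at the claimed integer vectors $\on$ and $\un$ and that these are the unique optimizers among integer partitions — essentially a majorization statement that must be checked for integer (not merely real) frequencies. However, the identity $g(n+1)-g(n)=\psi(n-\alpha+1)+1$ reduces every comparison above to the monotonicity of $\psi$ already recorded in Section \ref{sec:entropia}, so no finer analytic property of $g$ (such as the convexity estimates obtainable from (\ref{eqn:psi-cotas})) is needed.
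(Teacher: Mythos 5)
Your proof is correct and follows essentially the same route as the paper: a pairwise transfer (exchange) argument driven by the identity $g(n+1)-g(n)=\psi(n-\alpha+1)+1$ from (\ref{recmult}) settles the fixed-$k$ extremal configurations, and the singleton-splitting operation chains the extremes over $k$ to give the global bounds (\ref{supt}) and (\ref{inft}). The only difference is cosmetic: where the paper verifies the splitting inequality by reducing to the case $n_k=2$ and invoking monotonicity of $\psi$, you compute the increment exactly as $\frac{1}{\theta+\ell}\left(\psi(n_i-\alpha)-\psi(1-\alpha)\right)>0$, a slightly cleaner calculation that also anticipates the formula (\ref{nuevoDelta}) of Theorem \ref{teo:Delta-ppal}.
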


\begin{proof}
\noindent We will take into account that $-\psi(1-\alpha)>0$.
Let us first prove the extremal entropies for a fixed $k$. 
If $k=1$ there nothing to examine because $n_1=\ell$ and one simply 
computes the entropy. 

\medskip

\noindent Let $k>1$. Take two species $i\neq j$ and set $n_i=n$, $n_j=m$. 
Assume $n>1$. We will fix when
the entropy grows when one makes the change $n\to n-1$, 
$m\to m+1$ and all other frequencies $n_l$ are equal,
so the number of classes continues to be $k$ and the sum
of their frequencies continues to be $\ell$.
This change makes the entropy grow if and only if the following
inequality holds (we take into account that there 
is a minus in front of the third term at the right hand side in 
(\ref{eqn:entropy-post-PDP})), 
\begin{eqnarray*}
&{}&(n-1-\alpha)\psi(n-\alpha)+(m+1-\alpha)\psi(m+2-\alpha)\\
&\le& 
(n-\alpha)\psi(n-\alpha+1)+(m-\alpha)\psi(m-\alpha+1).
\end{eqnarray*}
From (\ref{recmult}) this is equivalent to
$$
0\le -\psi(m-\alpha+1)-1+\psi(n-\alpha)+1=\psi(n-\alpha)-\psi(m-\alpha+1).
$$
But this is equivalent to $m+1\le n$. So, when this last inequality holds 
we make the change $n\to n-1$ and $m\to m+1$. (Note that if $n=m+1$ the 
change leaves the set of frequencies invariant because the new pair is the 
same, $m$, $m+1$). Therefore the maximal entropy for $k$ classes is 
attained by the following structure of frequencies:
$$
n_i=\lfloor \ell/k \rfloor , i=1,\dots, l_k, 
\quad n_i=\lfloor \ell/k \rfloor +1, i=l_k+1,\dots l_k+h_k  
$$
with $h_k=\ell-k\lfloor \ell/k \rfloor$ and $l_k=k-h_k$. This 
is the frequencies are 'as equal as possible'.  

\medskip

\noindent On the opposite when $m+1\ge n$, the change $n\to n-1$, 
$m\to m+1$, makes the entropy decrease. So, the minimal entropy 
structure of frequencies is given by $n_1=\ell-(k-1)$ and the rest of 
$k-1$ species have frequency $1$. Therefore the first two 
inequalities of the Proposition are shown.

\medskip

\noindent Now  for obtaining the global maxima and 
minima we must see what happens with the extreme
solutions for different $k$'s. 

\medskip
  
\noindent This is based upon the following observation. Assume we have
$k<\ell$ number of species with frequencies $(n_1,\cdots,n_k)$ and 
$n_k>1$. Let us see what happens when we change this structure of frequencies
to one that contains $k+1$ species and  $(n_1,\cdots,n_{k-1},n_k-1,1)$, 
so with $n_{k+1}=1$. We 
claim that this operation makes the entropy strictly bigger. In fact
by (\ref{eqn:entropy-post-PDP}) the claim is equivalent to
$$
-\alpha\psi(1-\alpha)-(n_k-1-\alpha)\psi(n_k-\alpha)-(1-\alpha)\psi(2-\alpha)
> -(n_k-\alpha)\psi(n_k+1-\alpha).
$$
By using (\ref{recmult}) this last inequality is equivalent to
\begin{equation}
\label{agrandar}
-\alpha\psi(1-\alpha)-(1-\alpha)\psi(2-\alpha)+\psi(n_k-\alpha)+1>0.
\end{equation}
Since $\psi(n_k-\alpha)\ge \psi(2-\alpha)$ it suffices to check the 
inequality (\ref{agrandar}) for $n_k=2$. When in the expression at 
the left hand side in (\ref{agrandar}) we set $n_k=2$ we get, 
$$
\alpha(\psi(2-\alpha)-\psi(1-\alpha))+1,
$$
which is strictly positive, so (\ref{agrandar}) holds and the claim 
is satisfied.

\medskip

\noindent Then, if one takes the maximal configuration for $k<\ell$ species,
we know that there exists a frequency, that we can assume is the $k-$th
one, that satisfies $n_k>1$. So, by making the above operation gives a 
configuration of frequencies of a total number of species $k+1$ and such that 
the entropy increases strictly. In particular the maximal entropy for 
$k+1$ species is strictly bigger than the maximal entropy for $k$ species.
Then, (\ref{supt}) is proven.

\medskip

\noindent Finally when we make the above operation from the 
minimal configuration of 
$k$ species we retrieve the minimal configuration of the $k+1$ 
species and so the minimal entropy for $k$ species is strictly lower than 
the minimal entropy for $k+1$ species. So, 
(\ref{inft}) follows. The result is shown.  
\end{proof}

\medskip

\begin{remark}
\label{infini}
From (\ref{inft}) and since $-\psi(1-\alpha)>0$, we get 
$$
\min\limits_{\mathbf{Y}_\ell}((\theta\!+\!\ell)\E(H|\mathbf{Y}_\ell))\!
\ge \!(\theta\!+\!\ell) \psi(\theta\!+\!\ell\!+\!1) 
\!-\!(\ell\!-\!\alpha)\psi(\ell\!-\!\alpha\!+\!1),
$$
where $\theta>-\alpha$. On the other hand 
for every real $h>0$ we have $(x+h)\log(x+h+1)-x\log (x+1) 
\to \infty$ as $x\to \infty$.
Then, by also using (\ref{eqn:psi-approx}) we get that 
$\min\limits_{\mathbf{Y}_\ell}((\theta\!+\!\ell)\E(H|\mathbf{Y}_\ell))
\to \infty$ as $\ell\to \infty$. $\Box$
\end{remark}

\medskip

\noindent The relation (\ref{converg}) shows a key property between 
the frequentist estimator based on empirical probabilities
and the Bayesian estimator based on the posterior mean under the PDP 
prior, when $\ell\to \infty$.
In next section we will study the variation of weighted estimators
when making a finite step $\ell$ to $\ell+1$, showing a property 
that is similar for both, the frequentist and the PDP cases.

\medskip

\section{One step variation of entropy and discovery of a new species}
\label{sec:variacion}

\noindent We will state and prove our main result: an equality proving  
that a weighted variation between two successive steps of the posterior 
Bayesian entropy, is nonnegative and only vanishes in the
discovery times of a new species. 
This is done in Section \ref{subsec:var-entropy-PDP}.

\medskip

\noindent Related to this result, we previously study the 
variation of the entropy when
one only computes frequencies, and how it characterizes 
discovery time of species. 

\subsection{One step variation of entropy for frequencies}
\label{subsec:var-entropy-emp}

\noindent The framework is the following one: we collect a series of 
elements that are being classified in some 
class or species, at the moment when they are observed. 
At step $\ell$ one has collected in a sequential 
way $\ell$ elements $(X_1,\dots,X_\ell)$
that are grouped into a set 
of disjoint equivalence classes which are  
enumerated in a sequential way as it first element is discovered. Let
$k_\ell$ be the number of classes at step $\ell$ 
and $(n^\ell_j: j=1,\dots,k_\ell)$ be the number of
elements in these classes, so $\ell=\sum_{j=1}^{k_\ell}n_j^\ell$.

\medskip

\noindent When a new element $X_{\ell+1}$ is observed, there are two 
possibilities: this element is in a class of an element collected before 
or at $\ell$, in this case $k_{\ell+1}=k_\ell$ and if $X_{\ell+1}$ 
belongs to the class $j$ then $n_j^{\ell+1}=n_j^{\ell}+1$. When 
$X_{\ell+1}$ is in none of the classes of the previous elements 
then a new class is discovered, so $k_{\ell+1}=k_\ell+1$, 
$n_{k_\ell+1}^{\ell+1}=1$ at 
step $\ell+1$ and the frequencies of 
the classes that do not contain $X_{\ell+1}$ remain unchanged 
from $\ell$ to $\ell+1$. The entropy at step $\ell$ is
\begin{equation*}
H_\ell = - \sum_{j=1}^{k_\ell}
\frac{n_j^\ell}{\ell} \log \left(\frac{n_j^\ell}{\ell}\right).
\end{equation*}
This relation is entirely similar to
(\ref{eqn:entropy-MLE}).
We set $0\log0=0$, so one can add an empty class without changing the 
entropy.  

\begin{remark}
\label{notnot1}
In general the sequence $(H_\ell: \ell\ge 1)$ is neither 
increasing nor decreasing. For instance if the observations $X_i$, 
$i=1,\dots, 4$ are such that the pairs $\{X_1,X_3\}$ and $\{X_2,X_4\}$ 
belong to the same class, but 
the classes are different, it holds $\log 2=H_2=H_4>H_3$.  $\, \Box$
\end{remark}

\noindent One has $H_\ell\le \log \ell:=H^{\hbox{\footnotesize{max}}}_\ell$,
and the equality is attained only when $k_\ell=\ell$, that is when each of 
the $\ell$ elements defines its own class. We also have $H_\ell\ge 0$ and it 
vanishes only when there is a unique class containing the $\ell$ elements.
In all the other cases both
inequalities, the upper and lower bounds, are strict. 
Also notice that $H_1=0$.  

\medskip

\noindent Below we will consider the steps $\ell$ and $\ell+1$ of 
the sequence $(H_\ell: \ell\geq 1)$. We will
note by $j^{\ell+1}\in \{1,\dots,k_{\ell+1}\}$ the index of class 
that contains observation $X_{\ell+1}$. Then, 
$n_{j^{\ell+1}}^{\ell+1}$ is the 
frequency of class $X^*_{j^{\ell+1}}=X_{\ell+1}$ at step $\ell+1$.

\begin{proposition}
\label{freqc}
The functional given by 
\begin{equation*}   
{\aL}^f_\ell=\ell(\log \ell-H_\ell), \hbox{ for } \ell\ge 1 \hbox{ and } 
{\aL}^f_0=0,
\end{equation*}  
is a nondecreasing and nonnegative functional along the trajectory
$(X_\ell: \ell\ge 1)$ and it remains constant, 
${\aL}^f_{\ell+1}={\aL}^f_\ell$,
only when a new species is discovered at $\ell+1$.
More precisely,
$\Delta^f_{\ell+1}={\aL}^f_{\ell+1}-{\aL}^f_\ell$
satisfies
\begin{equation}
\label{eqn:entropy-delta-Servet} 
\forall \ell\geq 1, \quad \Delta^f_{\ell+1}
=n_{j^{\ell+1}} \log(n_{j^{\ell+1}})
-(n_{j^{\ell+1}}\!-\!1)\log(n_{j^{\ell+1}}\!-\!1) \geq 0,
\end{equation}
and $\Delta^f_{\ell+1}=0$ only when a new class is discovered at $\ell+1$, that is
\begin{equation}
\label{eqn:delta-0-Servet}
\Delta^f_{\ell+1}=0 \Leftrightarrow n_{j^{\ell+1}}=1.
\end{equation}
Moreover,
\begin{eqnarray}
\label{nlogn1}
&{}&(\ell+1)H_{\ell+1}-\ell H_\ell\\
\nonumber
&{}&\, =\!
(\ell\!+\!1)\log (\ell\!+\!1)\!-\!\ell \log \ell
\!-\!\left(n_{j^{\ell+1}} \log(n_{j^{\ell+1}})
\!-\!(n_{j^{\ell+1}}\!-\!1)\log(n_{j^{\ell+1}}\!-\!1)\right)\! \ge \!0,
\end{eqnarray}
and vanishes only when $K_{\ell+1}=1$.
\end{proposition}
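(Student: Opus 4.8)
The plan is to reduce the entire statement to one transparent algebraic identity for $\aL^f_\ell$. First I would rewrite $\ell H_\ell$ by expanding the logarithm: since $\sum_{j=1}^{k_\ell} n_j^\ell = \ell$, one has
\[
\ell H_\ell = -\sum_{j=1}^{k_\ell} n_j^\ell\bigl(\log n_j^\ell - \log \ell\bigr) = \ell \log \ell - \sum_{j=1}^{k_\ell} n_j^\ell \log n_j^\ell,
\]
and therefore
\[
\aL^f_\ell = \ell \log \ell - \ell H_\ell = \sum_{j=1}^{k_\ell} n_j^\ell \log n_j^\ell.
\]
This already delivers nonnegativity, because every frequency satisfies $n_j^\ell \ge 1$, so each summand is $\ge 0$; it is also consistent with $\aL^f_0 = 0 = \aL^f_1$ under the convention $0\log 0 = 0$.

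Next I would compute the one-step increment $\Delta^f_{\ell+1} = \aL^f_{\ell+1} - \aL^f_\ell$ directly from this identity. Passing from step $\ell$ to $\ell+1$ changes exactly one frequency: the class $j^{\ell+1}$ of $X_{\ell+1}$ goes from $n_{j^{\ell+1}}-1$ to $n_{j^{\ell+1}}$ (writing $n_{j^{\ell+1}}$ for the frequency at step $\ell+1$), while all other frequencies are unchanged, so the sum telescopes to
\[
\Delta^f_{\ell+1} = n_{j^{\ell+1}} \log n_{j^{\ell+1}} - (n_{j^{\ell+1}}-1)\log(n_{j^{\ell+1}}-1),
\]
which is exactly (\ref{eqn:entropy-delta-Servet}); this single formula covers both cases, since a new-class discovery is precisely $n_{j^{\ell+1}} = 1$, for which the right-hand side vanishes. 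To obtain the sign and the equality case I would note that $g(x) = x\log x$ satisfies $g(n) \ge g(n-1)$ for every integer $n \ge 1$, with equality only at $n=1$: for $n=1$ both terms vanish, while for $n \ge 2$ it follows from the monotonicity of $g$ on $[1,\infty)$. This gives $\Delta^f_{\ell+1} \ge 0$ and the characterization (\ref{eqn:delta-0-Servet}); since all increments are nonnegative and $\aL^f_0 = 0$, the functional is nondecreasing and nonnegative, and it is locally constant exactly at discovery times.

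For the weighted-entropy statement (\ref{nlogn1}) I would simply substitute $\ell H_\ell = \ell\log \ell - \aL^f_\ell$ at both steps, so that
\[
(\ell+1)H_{\ell+1} - \ell H_\ell = (\ell+1)\log(\ell+1) - \ell\log\ell - \Delta^f_{\ell+1},
\]
which is the claimed expression once $\Delta^f_{\ell+1}$ is replaced by the formula above. To prove nonnegativity I would introduce $f(x) = x\log x - (x-1)\log(x-1)$, so the right-hand side equals $f(\ell+1) - f(n_{j^{\ell+1}})$, and use that any class frequency at step $\ell+1$ obeys $n_{j^{\ell+1}} \le \ell+1$. The crux is the monotonicity of $f$: a direct differentiation gives $f'(x) = \log\bigl(x/(x-1)\bigr) > 0$ for $x > 1$, so $f$ is strictly increasing and $f(\ell+1) \ge f(n_{j^{\ell+1}})$. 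The difference therefore vanishes exactly when $n_{j^{\ell+1}} = \ell+1$, i.e. when all $\ell+1$ observations lie in a single class, which is the condition $K_{\ell+1} = 1$.

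Overall there is no serious obstacle: the whole argument hinges on the simplification $\aL^f_\ell = \sum_j n_j^\ell \log n_j^\ell$, after which everything reduces to the elementary monotonicity of $x\log x$ and of the discrete increment $f(x)$. The only points needing care are the boundary convention $0\log 0 = 0$ (so that a newly discovered class contributes $0$) and the correct separation of the two distinct equality cases, namely $n_{j^{\ell+1}} = 1$ for $\Delta^f_{\ell+1} = 0$ versus $n_{j^{\ell+1}} = \ell+1$ for the weighted difference in (\ref{nlogn1}) to vanish.
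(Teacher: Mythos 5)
Your proof is correct and follows essentially the same route as the paper: you establish the identity ${\aL}^f_\ell=\sum_{j} n_j^\ell\log n_j^\ell$, obtain $\Delta^f_{\ell+1}$ by telescoping over the single changed frequency, and derive (\ref{nlogn1}) with its sign and equality case from the monotonicity of the increment of $x\log x$. The only cosmetic difference is that you prove this monotonicity by differentiating $f(x)=x\log x-(x-1)\log(x-1)$, whereas the paper deduces it from the discrete inequality $(1+1/n)^n<(1+1/(n+1))^{n+1}$.
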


\begin{proof}
\noindent We will show (\ref{nlogn1}) at the end of the proof. 
All the other properties will follow 
when we show that
$\Delta^f_{\ell+1}$ satisfies the equality in 
(\ref{eqn:entropy-delta-Servet}). 
In fact, the inequality $\Delta^f_{\ell+1}\ge 0$ is a direct
consequence of it because $j \log j-(j-1)\log(j-1)\ge 0$. 
This implies that the functional ${\aL}^f_\ell$ is nondecreasing. 
Also we have that $j \log j-(j-1)\log(j-1)$ vanishes only if $j=1$, 
and so (\ref{eqn:delta-0-Servet}) is obtained and this ensures 
that the functional ${\aL}$ remains constant only at times 
when a new  class is discovered. 

\medskip

\noindent Notice that $\Delta^f_1={\aL}^f_1-{\aL}^f_0=0$ is 
consistent with the fact that at step $1$ a new class is discovered.  

\medskip

\noindent Let us show the equality in (\ref{eqn:entropy-delta-Servet}). 
To simplify notation, we note $j^*=j^{\ell+1}$ 
the class containing $X_{\ell+1}$ at step $\ell+1$.
Also we write $\sum\limits_{j\neq j^*}$
to mean $\sum\limits_{1\le j\le k_{\ell+1}, j\neq j^*}$. 
In the rest of the proof we note $n_j=n^{\ell+1}_j$
for $j=1,\dots,k_{\ell+1}$, so $n_{j^*}$ is the cardinality of the 
class $X^*_{j^*}$. If at step $\ell+1$ 
one has $j\neq j^*$ then the number of elements of 
the class $j$ is equal at steps $\ell$ and $\ell+1$. We have
$$
(\ell+1)H_{\ell+1} = - \sum_{j=1}^{k_{\ell+1}} n_j \log n_j 
+(\ell + 1) \log( \ell+1)
$$
and then
\begin{equation*}
(\ell\!+\!1)(\log (\ell\!+\!1)\!-\!H_{\ell\!+\!1})\!
=\!\sum_{j=1}^{k_{\ell+1}} n_j \log n_j 
\!=\!\sum\limits_{j\neq j^*} n_j \log n_j \!+\! n_{j^*}
\log n_{j^*}.
\end{equation*}
Now, the frequency of class $j^*$ at step $\ell$ is $n_{j^*}-1$, 
so in a similar way as we did for the term $\ell+1$ we get
$$
\ell(\log \ell- H_\ell)=\sum\limits_{j\neq j^*} n_j 
\log n_j +(n_{j^*}-1)
\log (n_{j^*}-1).
$$
Then, $\Delta^f_{\ell+1}=(\ell+1)(\log (\ell+1)-H_{\ell+1})
-\ell(\log \ell- H_\ell)$  
satisfies the equality in (\ref{eqn:entropy-delta-Servet}).

\medskip

\noindent Finally the equality in (\ref{nlogn1})
is directly obtained from the equality in (\ref{eqn:entropy-delta-Servet}).   
The inequality $\ge 0$ in this relation is a consequence of the increasing
property of the function $(n+1)\log(n+1)-n\log n$ for $n\ge 1$, which follows
from $(1+1/n)^n<(1+1/(n+1))^{n+1}$ for all $n\ge 1$ (and $0\log 0=0$).
\end{proof}

\medskip
    
\noindent Consider the function  
$\kappa(\ell+1)=(\ell+1)\log (\ell+1)-\ell\log \ell$ for $\ell\ge 1$. 
From $x-x^2/2\le \log (1+x)\le x$ for $x\geq 0$, we get 
$$
\frac{1}{2\ell}-\frac{1}{2\ell^2} \le \kappa(\ell+1)-(\log \ell +1) \le 
\frac{1}{\ell}\,,
$$
and for large $\ell$ we have
$\kappa(\ell+1)\approx \log \ell+1 + o(1)$.
These bounds and approximation can be applied for 
$\Delta^f_{\ell+1}=\kappa(n_{j^{\ell+1}})$.

\subsection{One step variation of the Bayesian entropy}
\label{subsec:var-entropy-PDP}

\noindent Let us consider the one step variation of Bayesian 
entropy for the PDP. Consider an i.i.d. sequence 
$(X_n: n\ge 1)$ of elements in 
${\cal X}$ chosen with a random measure $\Xi(\cdot)$ of a 
PDP$(\alpha,\theta)$ which fixes the family of 
finite samples $\mathbf{X}_\ell=(X_1,\dots,X_\ell)$, $\ell\ge 1$. 

\begin{remark}
\label{notnot2}
We note that the sequence of entropies $(\E(H|\mathbf{X}_\ell): \ell\ge 1)$ 
is neither increasing nor decreasing.
We can illustrate it with the same example used in
Remark \ref{notnot1}. So, assume the observations 
$X_i$, $i=1,\dots, 4$ are such that the pairs $\{X_1,X_3\}$ and 
$\{X_2,X_4\}$ are in the same class, 
but the classes are different. It can be checked that when 
$0\le \alpha<1/2$ and $-\alpha<\theta<1-3 \alpha$, 
it holds $\EE(H|\mathbf{X}_2)>\E(H|\mathbf{X}_3)$ and 
$\EE(H|\mathbf{X}_4)>\EE(H|\mathbf{X}_3)$.  $\, \Box$
\end{remark}
 
\noindent In the next result we will compute the one step variation 
of the posterior entropy of a PDP$(\alpha,\theta)$, 
when taking the sample 
$\mathbf{X}_{\ell+1}= (\mathbf{X}_\ell,X_{\ell+1})$.
We recall relation (\ref{supt}) that gives the maximum entropy 
for samples of size $\ell$, it is
$$
\max_{\mathbf{Y}_\ell}\E(H|\mathbf{Y}_\ell)= \psi(\theta+\ell+1)
- \psi(1-\alpha)-\frac{\ell}{\theta+\ell}.
$$
From (\ref{recmult}) we get
$$
(\theta+\ell+1)\psi(\theta+\ell+2)-(\theta+\ell)\psi(\theta+\ell+1)
=\psi(\theta+\ell+1)+1,
$$
and so,
\begin{equation}
\label{difmaxN}
(\theta\!+\!\ell\!+\!1)\max_{\mathbf{Y}_{\ell+1}}
\E(H|\mathbf{Y}_{\ell+1})
\!-\!(\theta\!+\!\ell)\max_{\mathbf{Y}_\ell}\E(H|\mathbf{Y}_\ell)
\!=\!\psi(\theta\!+\!\ell\!+\!1)\!-\!\psi(1\!-\!\alpha).
\end{equation}

\noindent Now we state our main result, satisfied by the 
functional given in (\ref{eq111}). As in the 
frequentist 
case we note by $j^{\ell+1}$ the index of the
species $X_{\ell+1}$, that is such that $X_{\ell+1}=X^*_{j^{\ell+1}}$. 

\begin{theorem}
\label{teo:Delta-ppal}
Let $(X_n: n\ge 1)$ be an i.i.d. sequence of a PDP$(\alpha,\theta)$.
The functional $({\aL}_\ell: \ell\ge 0)$ given by ${\aL}_0=0$ and
\begin{equation*}
{\aL}_\ell=(\theta+\ell)
\left(\max\limits_{\mathbf{Y}_\ell}\E(H|\mathbf{Y}_\ell)
-\E(H|\mathbf{X}_\ell)\right) 
\hbox{ for } \ell\ge 1;
\end{equation*}
is a nondecreasing and nonnegative functional along the trajectory
$(X_\ell: \ell\ge 1)$ and it
remains constant, ${\aL}_{\ell+1}={\aL}_\ell$,
only when a new species is discovered at $\ell+1$. More precisely, let
$$
\Delta_{\ell+1}={\aL}_{\ell+1}-{\aL}_\ell,
$$
and note $j^*=j^{\ell+1}$ be the index of the species $X_{\ell+1}$ and 
$n_{j^*}=n^{\ell+1}_{j^*}$ be the frequency of
this species at step $\ell+1$. Then,
\begin{equation}   
\label{nuevoDelta}
\Delta_{\ell+1} = \psi(n_{j^*}-\alpha)-\psi(1-\alpha)\ge 0
\end{equation}
and it vanishes only when $n_{j^*}=1$, that is when a new species 
is discovered at $\ell+1$. Moreover
\begin{equation}
\label{nlogn2}
(\theta+\ell+1)\E(H|\mathbf{X}_{\ell+1})-(\theta+\ell)\E(H|\mathbf{X}_\ell)
=\psi(\theta+\ell+1)-\psi(n_{j^*}-\alpha)>0.
\end{equation}
\end{theorem}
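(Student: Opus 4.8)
The plan is to reduce everything to the weighted posterior entropy $W_\ell := (\theta+\ell)\E(H|\mathbf{X}_\ell)$, whose one-step increment I will compute directly from the closed form (\ref{eqn:entropy-post-PDP}). Writing $W_\ell$ out, the leading term is $(\theta+\ell)\psi(\theta+\ell+1)$, and the recursion (\ref{recmult}) with $x=\theta+\ell+1$ collapses the difference of the leading terms across one step to $\psi(\theta+\ell+1)+1$. The remaining work is to control how the species-dependent terms change as one passes from $\mathbf{X}_\ell$ to $\mathbf{X}_{\ell+1}$, and for this I would split into the two possible events for $X_{\ell+1}$, exactly as in the frequentist Proposition \ref{freqc}.

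In the first case $X_{\ell+1}$ joins an already observed species, so $k$ is unchanged and only the frequency of species $j^*$ moves from $n_{j^*}-1$ to $n_{j^*}$; the coefficient $\theta+\alpha k$ of $\psi(1-\alpha)$ is untouched, and applying (\ref{recmult}) with $x=n_{j^*}-\alpha$ to the pair $(n_{j^*}-\alpha)\psi(n_{j^*}-\alpha+1)-(n_{j^*}-1-\alpha)\psi(n_{j^*}-\alpha)$ yields $\psi(n_{j^*}-\alpha)+1$, so that $W_{\ell+1}-W_\ell=\psi(\theta+\ell+1)-\psi(n_{j^*}-\alpha)$. In the second case $X_{\ell+1}$ opens a new species, so $k$ becomes $k+1$, the coefficient of $\psi(1-\alpha)$ picks up an extra $-\alpha$, and a new summand $-(1-\alpha)\psi(2-\alpha)$ appears. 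The step I expect to be the crux is verifying that these two contributions recombine correctly: applying (\ref{recmult}) with $x=1-\alpha$ gives $\alpha\psi(1-\alpha)+(1-\alpha)\psi(2-\alpha)=\psi(1-\alpha)+1$, so the new-species increment equals $\psi(\theta+\ell+1)-\psi(1-\alpha)$. Since $n_{j^*}=1$ in this case, this is again $\psi(\theta+\ell+1)-\psi(n_{j^*}-\alpha)$. Thus both cases unify into (\ref{nlogn2}), the apparent discontinuity at a discovery being smoothed precisely by the identity $\psi(1-\alpha)=\psi(n_{j^*}-\alpha)$ at $n_{j^*}=1$.

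With (\ref{nlogn2}) in hand the functional identity is immediate: writing $M_\ell:=(\theta+\ell)\max_{\mathbf{Y}_\ell}\E(H|\mathbf{Y}_\ell)$, relation (\ref{difmaxN}) gives $M_{\ell+1}-M_\ell=\psi(\theta+\ell+1)-\psi(1-\alpha)$, and since $\aL_\ell=M_\ell-W_\ell$ I would subtract to get $\Delta_{\ell+1}=(M_{\ell+1}-M_\ell)-(W_{\ell+1}-W_\ell)=\psi(n_{j^*}-\alpha)-\psi(1-\alpha)$, which is (\ref{nuevoDelta}). The monotonicity $\Delta_{\ell+1}\ge 0$ and the equality characterization then follow purely from the strict increase of $\psi$ on $(0,\infty)$: one has $n_{j^*}-\alpha\ge 1-\alpha$ with equality iff $n_{j^*}=1$, i.e. exactly at a new-species discovery, and nonnegativity of $\aL_\ell$ follows by telescoping from $\aL_0=0$ (with the initial step $\Delta_1=0$, consistent with the unique sample of size one). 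Finally, for the strict sign in (\ref{nlogn2}) I would use $n_{j^*}\le \ell+1$ together with $\theta>-\alpha$, which give $n_{j^*}-\alpha\le \ell+1-\alpha<\theta+\ell+1$, so strict monotonicity of $\psi$ yields $\psi(\theta+\ell+1)-\psi(n_{j^*}-\alpha)>0$.
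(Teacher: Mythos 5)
Your proof is correct and takes essentially the same route as the paper's: you compute the one-step increment of the weighted posterior entropy $(\theta+\ell)\E(H|\mathbf{X}_\ell)$ from the closed form (\ref{eqn:entropy-post-PDP}) using the recursion (\ref{recmult}), split into the new-species and existing-species cases, unify them via $n_{j^*}=1$, and subtract the maximal-entropy increment (\ref{difmaxN}) to obtain (\ref{nuevoDelta}). The only cosmetic difference is that you invoke (\ref{recmult}) at $x=1-\alpha$ where the paper uses the equivalent identity $(1-\alpha)\psi(2-\alpha)=(1-\alpha)\psi(1-\alpha)+1$, and you spell out the strict inequality $n_{j^*}-\alpha\le \ell+1-\alpha<\theta+\ell+1$ behind (\ref{nlogn2}), which the paper states more tersely.
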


\begin{proof}
The relation (\ref{nlogn2}) will be shown at the end of the proof. 
Note that for the rest of the relations it suffices to show 
(\ref{nuevoDelta}) because $n_{j^*}\ge 1$ 
and $\psi$ is strictly increasing then the expression at the 
right hand side of (\ref{nuevoDelta}) increases strictly with $n_{j^*}$
and it vanishes only when $n_{j^*}=1$. So, let us  
show equality (\ref{nuevoDelta}).

\medskip

\noindent The sequence of mean posterior entropies is noted by
$\widehat{H}_\ell=\E(H|\mathbf{X}_\ell)$, $\ell\ge 1$.
From (\ref{eqn:entropy-post-PDP}) we have
\begin{equation*}
(\theta\!+\!\ell)\widehat{H}_\ell =(\theta\!+\!\ell) 
\psi(\theta\!+\!\ell\!+\!1) - 
(\theta\!+\!\alpha k_\ell)\psi(1\!-\!\alpha) - \sum_{i=1}^{k_\ell}
(n^\ell_i\!-\!\alpha)\psi(n^\ell_i\!-\!\alpha+1).
\end{equation*}

\noindent Let us define,
\begin{equation}
\label{defet}
\eta_{\ell+1}= (\theta+\ell+1)\widehat{H}_{\ell+1}-
(\theta+\ell)\widehat{H}_\ell.
\end{equation}
From the definitions of $\Delta$ and $\eta$ and equality (\ref{difmaxN}) 
we get
\begin{equation*}
\Delta_{\ell+1}=\psi(\theta+\ell+1)-\psi(1-\alpha)-\eta_{\ell+1}.
\end{equation*}
So, instead of proving results for ${\aL}_\ell$ and $\Delta_{\ell}$ we 
will do it for $\eta_{\ell}$.

\medskip

\noindent Let $K_{\ell+1}=k_{\ell+1}$. We note by $n_j=n^{\ell+1}_j$ 
the frequency of class $X^*_j$ for $j=1,\dots, k_{\ell+1}$.
We will show that the following relation holds for $\ell\geq 1$:
\begin{equation}
\label{eqn:Delta-j}
\eta_{\ell+1} = \psi(\theta\!+\! \ell \!+\!1)-\psi(n_{j^*}\!-\!\alpha).
\end{equation}
Since this implies (\ref{nuevoDelta}), the result of the Theorem will be 
satisfied.

\medskip

\noindent We first show the case $k_{\ell+1}=k_\ell+1$, 
so $j^*=k_{\ell+1}$
is the index of a new class and $n_{j^*}=n_{k_{\ell+1}}=1$.
The mean posterior entropy 
$\widehat{H}_{\ell+1}$ is computed from (\ref{eqn:entropy-post-PDP}) but
with the sample size $\ell+1$, the number of 
species $k_{\ell+1}=k_\ell+1$, the frequencies $n_j$ are unchanged 
for $j=1,\dots,k_\ell$ and the 
frequency for the new species is $n_{k_\ell+1}=1$. Then,
\begin{eqnarray*}
(\theta+\ell+1)\widehat{H}_{\ell+1} &=&(\theta+\ell+1) 
\psi(\theta+\ell+2) - 
(\theta +(k_\ell+1)\alpha)\psi(1-\alpha) \\
&{}& \; - \sum_{i=1}^{k_\ell+1} (n_i-\alpha)\psi(n_i-\alpha+1).
\end{eqnarray*}
Now we use (\ref{recmult}) on $x=\theta+\ell+2$ to get 
$(\theta+\ell+1)\psi(\theta+\ell+2)=(\theta+\ell)
\psi(\theta+\ell+1)+\psi(\theta+\ell+1)+1$,
decompose the first term at the right hand side, 
separate the term $k_\ell+1$ in the sum and use $n_{k_\ell+1}=1$, to obtain,
\begin{eqnarray*}
(\theta+\ell+1)\widehat{H}_{\ell+1} &=& 
(\theta+\ell+1)\psi(\theta+\ell+1) + 1
-(\theta +(k_\ell+1)\alpha)\psi(1-\alpha) \\
&{}& \; 
-\sum_{i=1}^{k_\ell} (n_i-\alpha)\psi(n_i-\alpha+1)
-(1-\alpha)\psi(2-\alpha).
\end{eqnarray*}
On the other hand,
\begin{eqnarray*}
(\theta+\ell)\widehat{H}_{\ell} &=& (\theta+\ell)\psi(\theta+\ell+1) 
-(\theta +\alpha k_\ell)\psi(1-\alpha) \\
&{}& \; -\sum_{i=1}^{k_\ell} (n_i-\alpha)\psi(n_i-\alpha+1).
\end{eqnarray*}
By using 
$(1-\alpha)\psi(2-\alpha)=(1-\alpha)\psi(1-\alpha)+1$, we get
$$
\eta_{\ell+1}= (\theta+\ell+1)\widehat{H}_{\ell+1}
-(\theta+\ell)\widehat{H}_{\ell}
= \psi(\theta+\ell+1)-\psi(1-\alpha).
$$
So, relation (\ref{eqn:Delta-j}) is shown when $k_{\ell+1}=k_\ell+1$.

\medskip

\noindent Let us show (\ref{eqn:Delta-j}) when $k_{\ell+1}=k_\ell$. 
For $j\neq j^*$  we have $n_j=n_j^{\ell+1}=n_j^\ell$, and
for $j^*$ we have $n_{j^*}^\ell=n_{j^*}-1$. We will simplify some notation 
on sums and put $\sum_{i\neq j^*}=\sum_{i=1,\dots, k, i\neq j^*}$. From, 
\begin{eqnarray*}
(\theta\!+\!\ell\!+\!1)\widehat{H}_{\ell+1} &=& (\theta\!+\!\ell\!+\!1) 
\psi(\theta\!+\!\ell\!+\!2) - (\theta\!+\!\alpha k_\ell)
\psi(1\!-\!\alpha) \\ 
&{}&\;  -\sum_{i\neq j^*} (n_i\!-\!\alpha)\psi(n_i\!-\!\alpha\!+\!1)
-(n_{j^*}\!-\!\alpha)\psi(n_{j^*}\!\!-\!\alpha+1),
\end{eqnarray*}
and
$$
(\theta+\ell)\widehat{H}_{\ell} = (\theta\!+\!\ell) 
\psi(\theta\!+\!\ell\!+\!1) -
(\theta\!+\!\alpha k_\ell )\psi(1\!-\!\alpha)-\sum_{i=1}^{k_\ell} 
(n_i\!-\!\alpha)\psi(n_i\!-\!\alpha\!+\!1),
$$
we obtain
\begin{eqnarray*}
\eta_{\ell+1}&=&(\theta+\ell+1)\widehat{H}_{\ell+1} - 
(\theta+\ell)\widehat{H}_{\ell} \\ 
&=& (\theta+\ell+1) \psi(\theta+\ell+2) -(\theta+\ell) 
\psi(\theta+\ell+1)\\
&{}&\;\; -(n_{j^*}-\alpha)\psi(n_{j^*}-\alpha+1)
+(n_{j^*}-1-\alpha)\psi(n_{j^*}-\alpha).
\end{eqnarray*}
By using (\ref{recmult}) in $x=\theta+\ell+1$ and $x=n_{j^*}-\alpha$ we get,
\begin{eqnarray*}
&{}& (\theta+\ell+1) \psi(\theta+\ell+2) -(\theta+\ell) 
\psi(\theta+\ell+1)=\psi(\theta+\ell+1)+1 \hbox{ and }\\
&{}&
-(n_{j^*}-\alpha)\psi(n_{j^*}-\alpha+1)+ 
(n_{j^*}-\alpha-1)\psi(n_{j^*}-\alpha)
=-\psi(n_{j^*}-\alpha)-1.
\end{eqnarray*}
Therefore
$$
\eta_{\ell+1}=\psi(\theta+\ell+1)-\psi(n_{j^*}-\alpha),
$$
and the relation (\ref{eqn:Delta-j}) is shown 
for the case $k_{\ell+1}=k_\ell$.

\medskip 

\noindent To finish the proof of the Theorem
let us show (\ref{nlogn2}). It follows from definition (\ref{defet}),
the relation (\ref{eqn:Delta-j}), the inequality 
$\theta>-\alpha$ and $\psi$ is increasing.
\end{proof}

\medskip

\begin{remark}
\label{normal}
Set ${\wH}^{\hbox{\footnotesize{max}}}_\ell
=\max_{{\mathbf{Y}_\ell}}\E(H|\mathbf{Y}_\ell)$.
We have analyzed the variation,
$$
\Delta_{\ell+1}=(\theta+\ell+1)({\wH}^{\hbox{\footnotesize{max}}}_{\ell+1}
-{\wH}_{\ell+1})-(\theta+\ell)({\wH}^{\hbox{\footnotesize{max}}}_\ell
-{\wH}_\ell).
$$
Note that any other weights would produces only trivial changes or would
lead to the analysis of the variation weighted with the entropy. In fact
if one considers
$$
c_{\ell+1}=(\theta+\ell+1)(a_{\ell+1}
-{\wH}_{\ell+1})-(\theta+\ell)(a_\ell-{\wH}_\ell),
$$
then
$c_{\ell+1}=\Delta_{\ell+1}+(\theta+\ell+1)(a_{\ell+1}
-{\wH}^{\hbox{\footnotesize{max}}}_{\ell+1})
-(\theta+\ell)(a_\ell-{\wH}^{\hbox{\footnotesize{max}}}_\ell)$,   
so it suffices to add to $\Delta_{\ell+1}$ a
deterministic sequence depending on $\ell$. If one considers
$$
c'_{\ell+1}=b_{\ell+1}({\wH}^{\hbox{\footnotesize{max}}}_{\ell+1}
-{\wH}_{\ell+1})
-b_\ell({\wH}^{\hbox{\footnotesize{max}}}_\ell-{\wH}_\ell),
$$
one gets
\begin{eqnarray*}
c'_{\ell+1}&=&
b_\ell\left(\frac{b_{\ell+1}}{b_\ell}
({\wH}^{\hbox{\footnotesize{max}}}_{\ell+1}
-{\wH}_{\ell+1})-
({\wH}^{\hbox{\footnotesize{max}}}_\ell-{\wH}_\ell)\right)\\
&=&
b_\ell\left(\frac{b_{\ell+1}}{b_\ell}-\frac{\theta\!+\!\ell\!+\!1}
{\theta+\ell}\right)
({\wH}^{\hbox{\footnotesize{max}}}_{\ell+1}-{\wH}_{\ell+1}) 
+\frac{b_\ell}{\theta+\ell}\Delta_{\ell+1}.
\end{eqnarray*}
When we modify both, the additive and the
multiplicative terms, in $\Delta_{\ell+1}$ we
get a combination of above situations. $\, \Box$
\end{remark}

\medskip

\begin{remark}
\label{maxcomp}
In the frequentist case the weighted difference between maximal 
entropies at steps $\ell+1$ and $\ell$ is,
$$
d^f_{\ell+1}=(\ell+1)H^{\hbox{\footnotesize{max}}}_{\ell+1}-
\ell H^{\hbox{\footnotesize{max}}}_\ell=
(\ell+1)\log (\ell+1)-\ell\log \ell.
$$
From (\ref{difmaxN}), in the Bayesian PDP case 
the weighted difference of posterior entropies is,
$$
d_{\ell+1}=(\theta+\ell+1){\wH}^{\hbox{\footnotesize{max}}}_{\ell+1}
-(\theta+\ell){\wH}^{\hbox{\footnotesize{max}}}_{\ell}
=\Delta_{\ell+1}+\eta_{\ell+1}=\psi(\theta+\ell+1)-\psi(1-\alpha).
$$
For big $\ell$ we have that $d^f_{\ell+1}$ is of the order of 
$\log \ell +1$ while
from (\ref{eqn:psi-approx}) one gets that $d_{\ell+1}$ is of the order of
$\log \ell-\psi(1-\alpha)$ (we recall that $-\psi(1-\alpha)>0$). $\; \Box$.
\end{remark}   

\begin{remark}
\label{boundslimit}
\noindent Now, by applying the relations (\ref{eqn:psi-cotas}) and 
(\ref{eqn:psi-approx}) satisfied by the digamma function,  
from Theorem \ref{teo:Delta-ppal} we get the
following bounds for the weighted entropy variation $\eta_{\ell+1}=
(\theta+\ell+1)\widehat{H}_{\ell+1}-(\theta+\ell)\widehat{H}_\ell$ 
given by (\ref{nlogn2}),
\begin{eqnarray*}
\eta_{\ell+1} &\geq& \log(\theta\!+\!\ell\!+\!1)
-\frac{1}{\theta\!+\!\ell\!+\!1} 
-\log(n_{j^*}\!-\!\alpha) +
\frac{1}{2(n_{j^*}-\alpha)},\\
\eta_{\ell+1} &\leq&
\log(\theta\!+\!\ell\!+\!1) -\frac{1}{2(\theta\!+\!\ell\!+\!1)}
- \log(n_{j^*}\!-\!\alpha) + \frac{1}{n_{j^*}\!-\!\alpha}.
\end{eqnarray*}
When $\ell$ is sufficiently big one has,
$$
\eta_{\ell+1} \approx \log(\theta+\ell+1) 
- \frac{1}{2(\theta+\ell+1)} \hbox{ if } k_{\ell+1}=k_\ell+1;
$$
and if also $n_{j^*}$ is also sufficiently big, then
$$
\eta_{\ell+1} \approx \log(\theta\!+\!\ell\!+\!1) -
\frac{1}{2(\theta\!+\!\ell\!+\!1)} - \log(n_{j^*}\!-\!\alpha)
+ \frac{1}{2(n_{j^*}\!-\!\alpha)} \hbox{ if } k_{\ell+1}=k_\ell.
$$
\end{remark}

\medskip

\begin{remark}
\label{entdisc}
One can check that (\ref{nlogn2}) also holds for $\ell=0$,
where for the posterior mean entropy (\ref{eqn:entropy-post-PDP}),
when $\ell=0$, one takes $k=0$, and so $\theta\widehat{H}^0_{PDP} = 
\theta \psi(\theta+1) -\theta \psi(1-\alpha)$. So, 
by applying the telescopic property to (\ref{nlogn2}) we get
$$
(\theta+\ell)\widehat{H}_{\ell}=
C_\ell(\alpha, \theta)
-\sum_{i=1}^\ell \psi(n^*(i)-\alpha),
$$
where $C_\ell(\alpha, \theta)=\left(\sum_{i=1}^\ell 
\psi(\theta+i)\right)+\theta \psi(\theta+1)-\theta \psi(1-\alpha)$,
and $n^*(i)=\#\{1\leq j \leq i \, : \, X_j=X_i \}$ is 
the frequency of the class of the species $X_i$
at step $i$. Therefore, the only part of the entropy
depending on the sample is $-\sum_{i=1}^\ell
\psi(n^*(i)-\alpha)$. The terms $-\psi(n^*(i)-\alpha)$ strictly decreases 
with $n^*(i)$ (note that $-\psi(n^*(i)-\alpha)$ is positive when
$n^*(i)=1$, negative if $n^*(i)\ge 3$ and the sign of 
$-\psi(2-\alpha)$ depends on $\alpha\in [0,1)$). So, the terms 
$-\psi(n^*(i)-\alpha)$
can be seen as the `discovery value' of observing 
the species $X_i$ at step $i$, and so, up to the additive deterministic 
term, the entropy turns out to be the `discovery' values at the successive 
steps of the sample.
On the other hand, from (\ref{nuevoDelta}) we get that 
$$
\aL_\ell=\sum_{i=1}^\ell (\psi(n^*(i)-\alpha)-\psi(1-\alpha))
$$
is a sum of positive rewards for reinforcing what is already known that is
going in the opposite direction of discovery. Thus
the reward at step $i$, attains the minimum $0$ for the discovery of 
a new species. Differently to entropy, here no 
additional deterministic term 
depending on $\ell, \alpha$ and $\theta$ is required.
\end{remark} 

\medskip

\subsection{A common framework for the frequentist and the PDP cases}
The equations (\ref{nuevoDelta}) and (\ref{eqn:entropy-delta-Servet})
have the same shape, both are measuring the 
weighted differences of the distance of successive 
entropies to the maximal entropies and 
both formulae express that these differences only depend
on the updated frequency of the species of the new element. 
In fact this result holds for the class of entropies that satisfy:
\begin{equation}
\label{genent}
w(\ell){\bH}_\ell=u(a+\ell)-b-\sum_{i=1}^k (u(n^\ell_i-c)+v).
\end{equation}
Here $w(\ell)$ is a strictly positive function and increasing 
in $\ell$ and $u$ is a real 
function defined on $\NN-c=\{n-c: n\ge 1\}$ and it satisfies 
\begin{equation}
\label{constct1}
u(n+1-c)-u(n-c) \hbox{ is increasing for } n\ge 1. 
\end{equation}
The quantities $a,b,c,v$
are constants that satisfy the conditions
\begin{equation}
\label{constct}
0\le c<1, -c\le a \hbox{ and } 2u(1-c)+v< u(2-c).
\end{equation}
Notice that $H_\ell$ can be written as $\bH_\ell$ with $w(\ell)=\ell$, 
$u(x)=x\log x$ and $a=b=c=v=0$; and ${\wH}_\ell$ can be also 
written in the form $\bH_\ell$ with $w(\ell)=\theta+\ell$, 
$u(x)=x\psi(x+1)$, $a=\theta, 
b=\theta \psi(1-\alpha), c=\alpha, v=\alpha \psi(1-\alpha)$. 
In both cases $0\le c<1$. The second part in (\ref{constct}) holds 
for the PDP because $\theta>-\alpha$ and the third 
part of (\ref{constct}) holds in the frequentist case 
because it is equivalent to $2\log(1)\le \log 2$ and in the PDP case 
(\ref{constct}) becomes 
$(1-\alpha)\psi(2-\alpha)+\alpha \psi(1-\alpha)< \psi(2-\alpha)+1$ 
which is satisfied. In relation to (\ref{constct1}),
in the PDP case it follows from $x\psi(x+1)-(x-1)\psi(x)$ 
increasing in $x>0$
and in the frequentist case (\ref{constct1}) it is a consequence of 
$(n+2)\log (n+2)-(n+1)\log(n+1)>(n+1)\log (n+1)-n\log n$ for $n\ge 0$. 

\medskip
 
\noindent We will see that the conditions (\ref{constct1}) 
and (\ref{constct}) are sufficient to show that the properties proven 
for the variation of differences between maximal entropies and entropies 
for the cases $(H_\ell)$ and $(\wH_\ell)$, also hold 
for the entropy $({\bH}_\ell)$ written in (\ref{genent}).

\medskip

\noindent In order to retrieve the results in Proposition
\ref{prop:entropy-post-PDP-desigualdad} we need to analyze
what happens when, for two species $i\neq j$ with $n^\ell_i=n>1$
and $n^\ell_j=m$, one makes the change $m\to m+1$ and $n\to n-1$,
and all other frequencies $n_l$ remain equal. The entropy increases 
if and only if $u(n-c-1)+u(m-c+1)\le u(n-c)+u(m-c)$,
or equivalently $u(m-c+1)-u(m-c)\le u(n-c)-u(n-c-1)$. From
(\ref{constct1}) this holds if and only if $m+1\le n$. 

\medskip

\noindent The second requirement has to do with the following
change: for a class $i\le k$ with $n_i=n>1$ we set $n\to n-1$
and $k\to k+1$ so there is a new class with $n_{k+1}=1$.
This change makes the entropy increase if
$u(n-c-1)+u(1-c)+v< u(n-c)$ or equivalently if
$u(1-c)+v< u(n-c)-u(n-c-1)$ when $n>1$. From (\ref{constct1}) we get
that it suffices that the following inequality holds
$2u(1-c)+v< u(2-c)$, 
which is the second condition in (\ref{constct}).

\medskip

\noindent When these conditions take place the maximal entropy is attained
when all the classes are singletons, so
$$
w(\ell){\bH}^{\hbox{\footnotesize{max}}}_\ell
=u(a+\ell)-b-\sum_{i=1}^\ell (u(1-c)+v)
$$
Hence,
$$
w(\ell+1){\bH}^{\hbox{\footnotesize{max}}}_{\ell+1}-
w(\ell){\bH}^{\hbox{\footnotesize{max}}}_\ell=
u(a+\ell+1)-u(a+\ell)-(u(1-c)+v).
$$
Let us consider
$$
\Delta^{\bH}_{\ell+1}=w(\ell+1)
\left({\bH}^{\hbox{\footnotesize{max}}}_{\ell+1}-{\bH}_{\ell+1}\right)
-w(\ell)\left({\bH}^{\hbox{\footnotesize{max}}}_\ell-{\bH}_\ell\right).
$$
If in the transition $\ell\to \ell+1$ the number of classes changes from 
$k\to k+1$ one gets that
$$
\Delta^{\bH}_{\ell+1}=0.
$$
If in the transition $\ell\to \ell+1$ the number of classes is preserved, 
say $k$, and the class $j^*$ adds in one unit we get
$$
\Delta^{\bH}_{\ell+1}=u(n^\ell_{j^*}-c+1) -u(n^\ell_{j^*}-c) - (u(1-c)+v).
$$
We combine (\ref{constct1}) with the third condition in (\ref{constct}), 
to deduce that when the transition $\ell$ to $\ell+1$ preserves the number of 
classes then $\Delta^{\bH}_{\ell+1}> 0$. Hence, the results for the variation
of the weighted differences of the maximal entropy to the entropy hold for this
class of entropies (\ref{genent}).

\medskip

\noindent Finally, let us see what one requires to have
$$
w(\ell+1){\bH}_{\ell+1}-w(\ell){\bH}_{\ell}=(u(a+\ell+1)-u(a+\ell))
-(u(n^\ell_{j^*}-c+1) -u(n^\ell_{j^*}-c))\ge 0. 
$$
Since from (\ref{constct}) we have $a\ge -c$ 
and so the unique new condition is
$$
u(n+a+1)-u(n+a)\ge u(m-c+1)-u(m-c) \hbox{ for } n\ge m, 
$$
which is satisfied for both, the PDP and the frequentist case.

\bigskip

\noindent {\bf Acknowledgments.} This work was supported by the Center for 
Mathematical Modeling ANID Basal PIA program FB210005.
In addition, we would like to
thank the reviewer for their careful reading and valuable comments and 
suggestions,
which helped to clarify and improve the presentation of the article.

\bigskip

\end{document}